\documentclass[11pt,letterpaper]{article}
\usepackage[margin=1in]{geometry}
\usepackage{amsmath,amssymb,amsthm,mathtools}
\usepackage{microtype}
\usepackage{enumitem}
\usepackage{tikz}
\usetikzlibrary{calc,arrows.meta,decorations.pathreplacing,positioning,angles,quotes}
\usepackage{caption}
\usepackage{placeins}
\usepackage{hyperref}
\hypersetup{
  colorlinks=true,
  linkcolor=blue,
  citecolor=blue,
  urlcolor=blue,
  pdftitle={Reciprocal-Polar Linearization and Two Conic Constructions for the Cone Projection},
  pdfauthor={George M. Georgiou},
  pdfsubject={Euclidean conic geometry and the cone projection},
  pdfkeywords={conic sections, cone projection, reciprocal polarity,
    focus-directrix conics, envelope of normals}
}

\numberwithin{equation}{section}

\theoremstyle{plain}
\newtheorem{theorem}{Theorem}[section]
\newtheorem{proposition}[theorem]{Proposition}
\newtheorem{corollary}[theorem]{Corollary}

\theoremstyle{definition}
\newtheorem{definition}[theorem]{Definition}
\theoremstyle{remark}

\newcommand{\DR}{D_R}
\newcommand{\RR}{\mathbb{R}}
\newcommand{\norm}[1]{\left\lVert #1\right\rVert}

\newcommand{\RP}{\mathcal{R}_{\Sigma_R}}

\title{Reciprocal-Polar Linearization and Two Conic Constructions\\
for the Cone Projection}
\author{\small George M. Georgiou\thanks{\small School of Computer Science and Engineering, California State University, San Bernardino}\\[3pt]
\small \href{mailto:georgiou@csusb.edu}{georgiou@csusb.edu}}
\date{}

\begin{document}
\maketitle

\begin{abstract}
\noindent
Let
\[
  f_R(x)=\frac{x}{1+\norm{x}/R},\qquad R>0,
\]
be the radial cone projection of the plane onto the open disk
$\DR=\{x:\norm{x}<R\}$.  Previous work established the Self-Directrix and
Confocal-Codirectrix Theorems, according to which $f_R$ maps focal conic arcs
to focal conic arcs while preserving the distinguished focus and directrix.
This note combines those results with three classical facts recorded by
W.~H.~Besant.  First, reciprocal polarity with respect to
$\partial\DR$ represents every focal conic under consideration as the reciprocal
polar of a unique circle, and in this circle representation the nonlinear action
of $f_R$ becomes the elementary radius translation $q\mapsto q+R$.  Second, the normals at the
intersections of a fixed ray with the self-directrix family envelope an
explicit parabola.  Third, the tangent at $f_R(z)$ is constructed directly
from the original point $z$ and the original line, without differentiating
$f_R$ or solving for the conic.
\end{abstract}

\medskip
\noindent\textbf{Keywords.} conic sections, cone projection, reciprocal
polarity, focus-directrix conics, envelope of normals.

\section{Introduction and setup}

Write $O$ for the origin and
\[
  \Sigma_R=\partial\DR=\{x\in\RR^2:\norm{x}=R\}.
\]
The radial identity
\begin{equation}\label{eq:lens}
  \frac{1}{\norm{f_R(x)}}=\frac{1}{\norm{x}}+\frac{1}{R}
  \qquad (x\ne O)
\end{equation}
and the fact that $f_R$ preserves every ray from $O$ are the two elementary
features underlying the three constructions below.  By the
\emph{carrier} of an image arc we mean the complete conic containing that arc.

The classical results of Besant serve here as geometric input, but the
statements below concern the specific action of the cone projection.
Theorem~\ref{thm:rp-translation} gives a reciprocal-polar linearization of the
Confocal-Codirectrix law, while Theorem~\ref{thm:normal-envelope} determines
explicitly the parabola enveloped by the normals of the self-directrix sweep.
Proposition~\ref{prop:tangent-construction} specializes a classical
focus-directrix tangent theorem to give a direct construction from the
original input line.

A focal conic with focus $O$, axis direction $\alpha$, eccentricity $e>0$,
and semi-latus rectum $\lambda>0$ has polar equation
\begin{equation}\label{eq:focal-polar}
  \rho(\theta)=\frac{\lambda}{1+e\cos(\theta-\alpha)}
\end{equation}
over the directions for which the denominator is positive.  Its associated
directrix is at distance
\[
  \delta=\frac{\lambda}{e}
\]
from $O$.  The Confocal-Codirectrix parameter law~\cite{Georgiou-cone} is
\begin{equation}\label{eq:cc-law}
  \frac{1}{\lambda'}=\frac{1}{\lambda}+\frac{1}{R},
  \qquad
  \frac{1}{e'}=\frac{1}{e}+\frac{\delta}{R},
  \qquad
  \frac{\lambda'}{e'}=\delta.
\end{equation}
Thus $f_R$ preserves the focus and directrix and changes only the member of
the fixed-directrix pencil; these identities are re-derived transparently in
the next section.  For the limiting line member
$\ell=\{x\cdot n=d\}$, $\norm n=1$, the Self-Directrix Theorem~\cite{Georgiou-cone}
gives the carrier conic
\begin{equation}\label{eq:self-directrix-data}
  \text{focus }O,
  \qquad \text{directrix }\ell,
  \qquad e=\frac{R}{d},
  \qquad \lambda=R.
\end{equation}

\section{Reciprocal-polar radius translation}

Reciprocal polarity provides a new representation of focal conics: instead of
describing a conic directly, one may represent it by the unique circle whose
reciprocal polar it is.  In this representation the action of the cone projection
becomes especially simple, merely increasing the circle radius by the constant
amount~$R$.  We now make this correspondence explicit.

Besant proves that the reciprocal polar of a circle with respect to an
auxiliary circle is a conic whose focus is the center of the auxiliary
circle and whose directrix is the polar of the center of the original
circle; he also gives its eccentricity and semi-latus rectum
\cite[Arts.~206--207, pp.~207--208]{Besant1890}.  In the present notation,
that classical theorem takes an especially transparent form.

\begin{definition}\label{def:pole}
For the circle $\Sigma_R$, the \emph{pole} of a line
\[
  a\cdot x=c,\qquad \norm a=1,\quad c\ne0,
\]
is the point $(R^2/c)a$.  The \emph{reciprocal polar} of a smooth curve is
the locus of the poles of its tangent lines; a tangent through $O$ has no finite
pole, and in the projective completion its pole is the ideal point in the
direction perpendicular to the tangent.
\end{definition}

\medskip
\noindent\emph{Notation.} For a smooth curve $\Gamma$, we write
$\mathcal{R}_{\Sigma_R}(\Gamma)$ for its reciprocal polar with respect to
$\Sigma_R$, that is, the locus of the poles (with respect to $\Sigma_R$) of
the tangent lines to $\Gamma$.

Figure~\ref{fig:rp-notation} illustrates this notation.  A tangent line
$t$ to $\Gamma$ is replaced by its pole $P$ with respect to $\Sigma_R$.
As the point of tangency varies along $\Gamma$, the corresponding poles
trace the reciprocal-polar curve $\mathcal{R}_{\Sigma_R}(\Gamma)$.

\begin{figure}[p]
\centering
\resizebox{\textwidth}{!}{%
\begin{tikzpicture}[
  scale=0.96,
  >={Latex[length=2.3mm]},
  line cap=round,
  line join=round,
  font=\small,
  original/.style={very thick,blue!60!black},
  reciprocal/.style={very thick,orange!85!black},
  tangent/.style={thick,gray!70},
  construction/.style={thin,dashed,gray!65},
  boundary/.style={densely dotted,thick,black!75},
  pole/.style={circle,fill=red!70!black,inner sep=1.55pt},
  maparrow/.style={->,very thick,red!70!black}
]

\def\R{1.00}
\def\g{0.80}
\def\q{1.40}

\begin{scope}[xshift=-8.6cm]
  \node[font=\bfseries] at (0,3.25) {1. Tangents to $\Gamma$};

  \draw[boundary] (0,0) circle (\R);
  \node[black!75,font=\footnotesize] at (-1.20,-1.30) {$\Sigma_R$};
  \fill (0,0) circle (1.5pt) node[below left] {$O$};

  \draw[original] (\g,0) circle (\q);
  \node[blue!60!black,fill=white,inner sep=1pt] at (1.78,1.45) {$\Gamma$};

  \foreach \th in {120,145,180,215,240}{
    \pgfmathsetmacro{\qx}{\g+\q*cos(\th)}
    \pgfmathsetmacro{\qy}{\q*sin(\th)}
    \pgfmathsetmacro{\tx}{-sin(\th)}
    \pgfmathsetmacro{\ty}{cos(\th)}
    \draw[tangent]
      ({\qx-1.20*\tx},{\qy-1.20*\ty}) --
      ({\qx+1.20*\tx},{\qy+1.20*\ty});
    \fill[blue!60!black] (\qx,\qy) circle (1.2pt);
  }

\end{scope}

\begin{scope}
  \node[font=\bfseries] at (0,3.25) {2. One tangent $t$ gives one pole $P$};

  \begin{scope}[scale=1.2]
  \def\th{150}
  \pgfmathsetmacro{\Qx}{\g+\q*cos(\th)}
  \pgfmathsetmacro{\Qy}{\q*sin(\th)}
  \pgfmathsetmacro{\ux}{cos(\th)}
  \pgfmathsetmacro{\uy}{sin(\th)}
  \pgfmathsetmacro{\tx}{-sin(\th)}
  \pgfmathsetmacro{\ty}{cos(\th)}
  \pgfmathsetmacro{\cval}{\q+\g*cos(\th)}
  \pgfmathsetmacro{\Prad}{\R*\R/\cval}
  \pgfmathsetmacro{\Px}{\Prad*\ux}
  \pgfmathsetmacro{\Py}{\Prad*\uy}
  \pgfmathsetmacro{\Hx}{\cval*\ux}
  \pgfmathsetmacro{\Hy}{\cval*\uy}
  \pgfmathsetmacro{\half}{sqrt(\R*\R-\cval*\cval)}

  \draw[boundary] (0,0) circle (\R);
  \node[black!75,font=\footnotesize] at (-1.00,-1.02) {$\Sigma_R$};

  \draw[original] (\g,0) circle (\q);
  \node[blue!60!black,fill=white,inner sep=1pt] at (1.62,1.30) {$\Gamma$};

  \draw[tangent]
    ({\Qx-1.00*\tx},{\Qy-1.00*\ty}) --
    ({\Qx+1.65*\tx},{\Qy+1.65*\ty});
  \node[gray!70!black,fill=white,inner sep=0.8pt]
    at ({\Qx-0.85*\tx},{\Qy-0.85*\ty}) {$t$};

  \draw[black!55,line width=1.0pt]
    ({\Hx+\half*\tx},{\Hy+\half*\ty}) -- ({\Hx-\half*\tx},{\Hy-\half*\ty});
  \fill[black!80] ({\Hx+\half*\tx},{\Hy+\half*\ty}) circle (1.1pt);
  \fill[black!80] ({\Hx-\half*\tx},{\Hy-\half*\ty}) circle (1.1pt);
  \node[black!80,font=\footnotesize,fill=white,inner sep=0.8pt]
    at ({\Hx+\half*\tx-0.22},{\Hy+\half*\ty+0.02}) {$A$};
  \node[black!80,font=\footnotesize,fill=white,inner sep=0.8pt]
    at ({\Hx-\half*\tx-0.02},{\Hy-\half*\ty+0.24}) {$B$};

  \fill[blue!60!black] (\Qx,\Qy) circle (1.3pt);
  \node[blue!60!black,fill=white,inner sep=0.8pt] at ({\Qx-0.26},{\Qy+0.03}) {$Q$};

  \draw[construction] (0,0)--(\Hx,\Hy);
  \fill[gray!55!black] (\Hx,\Hy) circle (1.2pt);
  \node[gray!45!black,font=\footnotesize,fill=white,inner sep=0.8pt]
    at ({\Hx-0.02},{\Hy-0.30}) {$H$};
  \draw[gray!60] ($(\Hx,\Hy)+(-0.10*\tx,-0.10*\ty)$)--
                 ($(\Hx,\Hy)+(-0.10*\tx+0.10*\ux,-0.10*\ty+0.10*\uy)$)--
                 ($(\Hx,\Hy)+(0.10*\ux,0.10*\uy)$);

  \draw[construction,->] (0,0)--({1.12*\Px},{1.12*\Py});
  \node[pole] (P) at (\Px,\Py) {};
  \node[red!70!black,fill=white,inner sep=0.8pt] at ({\Px-0.08},{\Py+0.24}) {$P$};
  \node[black!75,font=\scriptsize,fill=white,inner sep=0.6pt]
    at (-0.98,0.10) {$OH\cdot OP=R^2$};

  \fill (0,0) circle (1.35pt) node[below right,inner sep=1.5pt] {$O$};
  \end{scope}

\end{scope}

\begin{scope}[xshift=8.6cm]
  \node[font=\bfseries] at (0,3.25) {3. Sample tangents and their pole locus};

  \draw[boundary] (0,0) circle (\R);
  \node[black!75,font=\footnotesize] at (-1.20,-1.30) {$\Sigma_R$};
  \fill (0,0) circle (1.5pt) node[below left] {$O$};

  \draw[blue!60!black,opacity=0.35,thin]
    ([shift={(-85:\q)}]\g,0) arc[start angle=-85,end angle=85,radius=\q];
  \node[blue!60!black,opacity=0.75,font=\footnotesize] at (2.02,1.24) {$\Gamma$};

  \draw[reciprocal]
    plot[domain=0:360,samples=360,variable=\t]
    ({(\R*\R/(\q+\g*cos(\t)))*cos(\t)},
     {(\R*\R/(\q+\g*cos(\t)))*sin(\t)});

  \foreach \th in {-70,-40,-15,15,40,70}{
    \pgfmathsetmacro{\qx}{\g+\q*cos(\th)}
    \pgfmathsetmacro{\qy}{\q*sin(\th)}
    \pgfmathsetmacro{\tx}{-sin(\th)}
    \pgfmathsetmacro{\ty}{cos(\th)}
    \pgfmathsetmacro{\den}{\q+\g*cos(\th)}
    \pgfmathsetmacro{\rr}{\R*\R/\den}
    \pgfmathsetmacro{\px}{\rr*cos(\th)}
    \pgfmathsetmacro{\py}{\rr*sin(\th)}

    \draw[tangent,opacity=0.58]
      ({\qx-0.78*\tx},{\qy-0.78*\ty}) --
      ({\qx+0.78*\tx},{\qy+0.78*\ty});
    \fill[blue!60!black] (\qx,\qy) circle (1.1pt);
    \draw[construction,opacity=0.65] (0,0)--(\px,\py);
    \node[pole] at (\px,\py) {};
  }

  \foreach \th/\i in {70/1,-15/2,-70/3}{
    \pgfmathsetmacro{\qx}{\g+\q*cos(\th)}
    \pgfmathsetmacro{\qy}{\q*sin(\th)}
    \pgfmathsetmacro{\den}{\q+\g*cos(\th)}
    \pgfmathsetmacro{\rr}{\R*\R/\den}
    \node[gray!40!black,font=\small,fill=white,inner sep=0.6pt]
      at ({\qx+0.42*cos(\th)},{\qy+0.42*sin(\th)}) {$t_{\i}$};
    \node[red!70!black,font=\small,fill=white,inner sep=0.6pt]
      at ({(\rr+0.34)*cos(\th)},{(\rr+0.34)*sin(\th)}) {$P_{\i}$};
  }

  \node[orange!85!black,align=center,fill=white,inner sep=1.5pt] at (1.05,-2.18)
    {$\mathcal{R}_{\Sigma_R}(\Gamma)$\\locus of all poles};
  \draw[->,orange!85!black,thin] (0.72,-1.88)--(0.30,-1.10);
\end{scope}

\draw[maparrow] (-4.75,0.05)--(-4.15,0.05)
  node[midway,above,font=\footnotesize,black] {$t\mapsto P$};
\draw[maparrow] (4.15,0.05)--(4.75,0.05)
  node[midway,above,font=\footnotesize,black] {vary $t$};

\node[align=center,font=\small] at (0,-3.35)
  {$\displaystyle
    \mathcal{R}_{\Sigma_R}(\Gamma)
    =\{\text{poles, with respect to }\Sigma_R,\text{ of the tangent lines to }\Gamma\}. $};

\end{tikzpicture}
}
\caption{The reciprocal-polar notation $\mathcal{R}_{\Sigma_R}(\Gamma)$.
Panel~(1) shows the curve $\Gamma$, the reference circle $\Sigma_R$, and a
selection of its tangent lines.  In panel~(2), the pole $P$ of a tangent $t$ is
the inverse in $\Sigma_R$ of the foot $H$ of the perpendicular from $O$ to $t$;
in the configuration shown $\{A,B\}=t\cap\Sigma_R$ and $H$ is the midpoint of the
chord $AB$.  In panel~(3), each sample tangent $t_i$ yields a pole $P_i$; as the
point of
tangency varies along $\Gamma$, the poles trace the reciprocal-polar curve
$\mathcal{R}_{\Sigma_R}(\Gamma)$.}
\label{fig:rp-notation}
\end{figure}

Write
\[
  S^1=\{u\in\RR^2:\norm u=1\},
\]
the unit circle of oriented directions; by an \emph{interval of directions} we
mean a connected arc of $S^1$, possibly $S^1$ itself.  Let
\[
  C(G,q)=\{G+q u:u\in S^1\},\qquad G\ne O,\quad q>0,
\]
so that $G$ is the center of the circle and $q$ its radius; the unit vector $u$
is the outward normal, and $Q=G+qu$ is the point of tangency in the direction
$u$.  Write $G=g a$, where $g=\norm G$ and $a$ is a unit vector of angle
$\alpha$.  Since the tangent at $Q$ is perpendicular to the radius $GQ=qu$, its
outward unit normal is $u$; as $\norm u=1$, the tangent has the form $u\cdot x=c$,
where $c$ is the signed distance from $O$ to the tangent in the direction $u$.
Because the line passes through $Q$,
\[
  c=u\cdot Q=u\cdot(G+qu)=G\cdot u+q\,(u\cdot u)=q+G\cdot u,
  \qquad u\cdot u=\norm u^2=1 .
\]
The tangent at $Q$ is therefore
\[
  u\cdot x=q+G\cdot u=u\cdot Q .
\]
Its pole with respect to $\Sigma_R$ is therefore
\begin{equation}\label{eq:Pq}
  P_q(u)=\frac{R^2}{q+G\cdot u}\,u.
\end{equation}
For an interval of directions $I$ we write
$\left.\RP(C(G,q))\right|_I:=\{P_q(u):u\in I\}$ for the corresponding sub-arc of
the reciprocal-polar conic.
Whenever the denominator is positive, this is an ordinary positive focal
radius.  Geometrically the denominator $u\cdot Q$ is the signed distance from
$O$ to the tangent line along its unit normal $u$; its magnitude is the length
$OH$ of the perpendicular from $O$ to that line
(Figure~\ref{fig:signed-distance}).  It is positive exactly when the tangent
lies on the $u$-side of $O$, zero when the tangent passes through $O$, and
negative on the opposite side.
If $u=(\cos\theta,\sin\theta)$, then
\begin{equation}\label{eq:rp-polar}
  \rho_q(\theta)
  =\frac{R^2}{q+g\cos(\theta-\alpha)}
  =\frac{R^2/q}{1+(g/q)\cos(\theta-\alpha)}.
\end{equation}
Consequently the reciprocal-polar conic has
\begin{equation}\label{eq:rp-data}
  \lambda=\frac{R^2}{q},
  \qquad
  e=\frac{g}{q},
  \qquad
  \delta=\frac{\lambda}{e}=\frac{R^2}{g},
  \qquad
  \ell_G=\{x:G\cdot x=R^2\}.
\end{equation}
In particular, its type is read directly from the incidence of $O$ with the
circle $C(G,q)$:
\begin{equation}\label{eq:incidence-trichotomy}
  q<g\Longleftrightarrow\text{hyperbola},\qquad
  q=g\Longleftrightarrow\text{parabola},\qquad
  q>g\Longleftrightarrow\text{ellipse}.
\end{equation}

\smallskip
\noindent\emph{Inverse construction.}
The data~\eqref{eq:rp-data} invert, so the correspondence is a bijection: every
focal conic with focus $O$, eccentricity $e$, and semi-latus rectum $\lambda$ is
the reciprocal polar of a \emph{unique} circle $C(G,q)$.  Writing the directrix
in the oriented form
\[
  \ell=\{x:a\cdot x=\delta\},\qquad \norm a=1,\quad \delta=\frac{\lambda}{e}>0,
\]
the circle is recovered by
\[
  q=\frac{R^2}{\lambda},
  \qquad
  G=\frac{R^2}{\delta}\,a,
\]
so that $g=\norm G=eR^2/\lambda$.  Since the directrix is $a\cdot x=\delta$, its
pole with respect to $\Sigma_R$ is exactly $G$ (Definition~\ref{def:pole}); one
may therefore pass freely between a focal conic and its associated circle
$C(G,q)$.  The limiting case $q\to 0^{+}$, which recovers the Self-Directrix
Theorem, is treated in Corollary~\ref{cor:self-directrix}.

\begin{figure}[htbp]
\centering
\resizebox{\textwidth}{!}{%
\begin{tikzpicture}[
  >={Latex[length=2.2mm]},line cap=round,line join=round,font=\small,
  tang/.style={very thick,blue!55!black},
  axisline/.style={->,dashed,gray!60},
  perp/.style={thick,black},
  ra/.style={gray!70}
]

\begin{scope}[xshift=-4.8cm]
  \node[font=\bfseries] at (0,2.72) {(a)\ \ $q+G\cdot u>0$};
  \draw[axisline] (0,-1.9)--(0,2.2);
  \node[gray!55,anchor=south east] at (-0.05,2.02) {$u$};
  \draw[tang] (-1.45,1.2)--(1.75,1.2);
  \node[blue!55!black,anchor=south east] at (-1.45,1.24) {$t$};
  \draw[perp] (0,0)--(0,1.2);
  \node[gray!30!black,anchor=east,font=\footnotesize] at (-0.1,0.62) {$OH$};
  \fill (0,1.2) circle (1.3pt);
  \node[anchor=south east] at (-0.05,1.26) {$H$};
  \draw[ra] (0.17,1.2)--(0.17,1.03)--(0,1.03);
  \fill (0,0) circle (1.5pt);
  \node[anchor=north east] at (0.0,-0.04) {$O$};
  \fill[blue!60!black] (1.28,1.2) circle (1.3pt);
  \node[blue!60!black,anchor=south west] at (1.25,1.26) {$Q$};
  \fill[red!70!black] (0,1.85) circle (1.4pt);
  \node[red!70!black,anchor=west] at (0.08,1.85) {$P$};
  \node[align=center,font=\footnotesize] at (0,-2.5)
    {$u\cdot Q=q+G\cdot u>0$\\[1pt]pole on $\RR_{>0}\,u$};
\end{scope}

\begin{scope}
  \node[font=\bfseries] at (0,2.72) {(b)\ \ $q+G\cdot u=0$};
  \draw[axisline] (0,-1.9)--(0,2.2);
  \node[gray!55,anchor=south east] at (-0.05,2.02) {$u$};
  \draw[tang] (-1.45,0)--(1.75,0);
  \node[blue!55!black,anchor=south east] at (-1.45,0.05) {$t$};
  \fill (0,0) circle (1.5pt);
  \node[anchor=north east] at (-0.03,-0.05) {$O\!=\!H$};
  \fill[blue!60!black] (1.28,0) circle (1.3pt);
  \node[blue!60!black,anchor=south west] at (1.25,0.06) {$Q$};
  \draw[->,red!70!black,thick] (0,0.85)--(0,2.05);
  \node[red!70!black,anchor=west] at (0.08,1.55) {$P_\infty$};
  \node[align=center,font=\footnotesize] at (0,-2.5)
    {$u\cdot Q=q+G\cdot u=0$\\[1pt]$t$ through $O$;\ \ pole at $\infty$};
\end{scope}

\begin{scope}[xshift=4.8cm]
  \node[font=\bfseries] at (0,2.72) {(c)\ \ $q+G\cdot u<0$};
  \draw[axisline] (0,-1.9)--(0,2.2);
  \node[gray!55,anchor=south east] at (-0.05,2.02) {$u$};
  \fill (0,0) circle (1.5pt);
  \node[anchor=south east] at (-0.03,0.07) {$O$};
  \draw[tang] (-1.45,-0.9)--(1.75,-0.9);
  \node[blue!55!black,anchor=north east] at (-1.45,-0.94) {$t$};
  \draw[perp] (0,0)--(0,-0.9);
  \node[gray!30!black,anchor=east,font=\footnotesize] at (-0.1,-0.5) {$OH$};
  \fill (0,-0.9) circle (1.3pt);
  \node[anchor=north east] at (-0.05,-0.84) {$H$};
  \draw[ra] (0.17,-0.9)--(0.17,-0.73)--(0,-0.73);
  \fill[blue!60!black] (1.28,-0.9) circle (1.3pt);
  \node[blue!60!black,anchor=north west] at (1.25,-0.86) {$Q$};
  \fill[red!70!black] (0,-1.62) circle (1.4pt);
  \node[red!70!black,anchor=west] at (0.08,-1.62) {$P$};
  \node[align=center,font=\footnotesize] at (0,-2.5)
    {$u\cdot Q=q+G\cdot u<0$\\[1pt]pole on $\RR_{<0}\,u$};
\end{scope}

\end{tikzpicture}
}
\caption{Geometric meaning of the sign of the denominator $q+G\cdot u$ in the
pole $P_q(u)=R^2u/(q+G\cdot u)$.  Since $\norm u=1$ and $Q=G+qu$, the number
$q+G\cdot u=u\cdot Q$ is the signed distance from $O$ to the tangent along its
unit normal $u$, of magnitude $OH$.  Its sign places the pole on the positive
ray $\RR_{>0}u$~(a), at infinity~(b), or on the opposite ray $\RR_{<0}u$~(c); the
interval $I$ of Theorem~\ref{thm:rp-translation} selects the positive case~(a),
the focus-side branch.}
\label{fig:signed-distance}
\end{figure}

\FloatBarrier
\begin{theorem}[Reciprocal-polar radius translation]\label{thm:rp-translation}
Let $I$ be any interval of directions on which
$q+G\cdot u>0$; equivalently, the tangent to $C(G,q)$ lies on the $u$-side of
$O$ and its pole lies on the positive ray $\RR_{>0}u$.  Then, using the same
direction parameter $u$,
\begin{equation}\label{eq:rp-translation}
  f_R\bigl(P_q(u)\bigr)=P_{q+R}(u),\qquad u\in I.
\end{equation}
Equivalently,
\begin{equation}\label{eq:rp-curve-translation}
  f_R\bigl(\RP(C(G,q))\big|_I\bigr)
  =\RP(C(G,q+R))\big|_I.
\end{equation}
Thus reciprocal polarity linearizes the Confocal-Codirectrix action:
the center $G$ stays fixed and the reciprocal-circle radius is translated by
exactly $R$,
\[
  \boxed{\ q\longmapsto q+R\ }.
\]
\end{theorem}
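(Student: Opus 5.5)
The plan is a direct computation using the radial identity~\eqref{eq:lens} together with the fact that $f_R$ preserves every ray from $O$. Fix $u\in I$. Since $q+G\cdot u>0$, the pole $P_q(u)=\frac{R^2}{q+G\cdot u}\,u$ lies on the open ray $\RR_{>0}u$ and has norm $\norm{P_q(u)}=R^2/(q+G\cdot u)$. Because $f_R$ multiplies each nonzero point by the positive scalar $1/(1+\norm{x}/R)$, the image $f_R(P_q(u))$ again lies on $\RR_{>0}u$. It is therefore determined by its norm, and~\eqref{eq:lens} gives
\[
  \frac{1}{\norm{f_R(P_q(u))}}=\frac{q+G\cdot u}{R^2}+\frac1R=\frac{(q+R)+G\cdot u}{R^2}.
\]

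Next I compare this with $P_{q+R}(u)=\frac{R^2}{(q+R)+G\cdot u}\,u$. Its denominator is $(q+G\cdot u)+R>0$, so this pole also lies on $\RR_{>0}u$, with the same norm as $f_R(P_q(u))$. Two points on the same open ray with equal norms coincide, which proves~\eqref{eq:rp-translation}.

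The curve statement~\eqref{eq:rp-curve-translation} then follows by taking images over $u\in I$. By definition $\RP(C(G,q))|_I=\{P_q(u):u\in I\}$, and its image under $f_R$ is $\{P_{q+R}(u):u\in I\}$, which is exactly $\RP(C(G,q+R))|_I$. Note that $I$ is still an admissible interval for the circle $C(G,q+R)$, since $q+R+G\cdot u>q+G\cdot u>0$. The center $G$ does not appear in the update at all, so it stays fixed while the radius becomes $q+R$.

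As a consistency check, I will read off~\eqref{eq:rp-data} for the new circle. It gives $1/\lambda'=(q+R)/R^2=1/\lambda+1/R$ and $1/e'=(q+R)/g=1/e+R/g=1/e+\delta/R$, because $\delta=R^2/g$. The directrix $\ell_G$ is unchanged, so this recovers the Confocal-Codirectrix law~\eqref{eq:cc-law}.

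There is no real obstacle here. The only point needing care is the sign hypothesis: it is what places the pole on the positive ray, so that $\norm{P_q(u)}$ equals $R^2/(q+G\cdot u)$ rather than its absolute value. It also guarantees that the translated denominator stays positive, so the image lands on the focus-side branch of the new conic rather than its opposite branch.
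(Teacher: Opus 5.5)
Your proof is correct and is essentially the paper's own argument: you compute $1/\norm{P_q(u)}=(q+G\cdot u)/R^2$, add $1/R$ via the lens identity~\eqref{eq:lens}, and conclude from equal norms on the common positive ray $\RR_{>0}u$. Your remarks on the curve version and on recovering~\eqref{eq:cc-law} match the discussion the paper gives immediately after the proof.
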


\begin{proof}
Equation~\eqref{eq:Pq} gives
\[
  \frac{1}{\norm{P_q(u)}}=\frac{q+G\cdot u}{R^2}.
\]
Applying the lens identity~\eqref{eq:lens},
\[
  \frac{1}{\norm{f_R(P_q(u))}}
  =\frac{q+G\cdot u}{R^2}+\frac{1}{R}
  =\frac{q+R+G\cdot u}{R^2}.
\]
The right-hand side is $1/\norm{P_{q+R}(u)}$.  Since both $f_R(P_q(u))$ and
$P_{q+R}(u)$ lie on the positive ray $\RR_{>0}u$, they coincide:
$f_R(P_q(u))=P_{q+R}(u)$.
\end{proof}

\begin{figure}[htbp]
\centering
\begin{tikzpicture}[
  scale=0.93,
  >={Latex[length=2.3mm]},
  line cap=round,line join=round,font=\small,
  conic/.style={very thick},
  helper/.style={thick,dashed,gray!65},
  boundary/.style={densely dotted,thick,black!75},
  pull/.style={->,red!70!black,thin}
]
\def\Rr{1.5}
\def\gg{2.1}
\def\qq{1.4}
\def\qp{2.9}
\pgfmathsetmacro{\dd}{\Rr*\Rr/\gg}
\pgfmathsetmacro{\lamA}{\Rr*\Rr/\qq}
\pgfmathsetmacro{\eccA}{\gg/\qq}
\pgfmathsetmacro{\lamB}{\Rr*\Rr/\qp}
\pgfmathsetmacro{\eccB}{\gg/\qp}
\pgfmathsetmacro{\thstar}{acos(-1/\eccA)}

\begin{scope}[xshift=-6.4cm]
  \node[font=\bfseries] at (2.1,3.45) {reciprocal-circle side};
  \draw[->,darkgray] (-1.65,0)--(5.35,0);
  \draw[boundary] (0,0) circle (\Rr);
  \node[black!75,font=\footnotesize] at (-1.35,-1.30) {$\Sigma_R$};
  \draw[helper] (\dd,-3.0)--(\dd,3.0);
  \node[gray!70!black,font=\footnotesize,anchor=south east] at (\dd-0.06,2.58)
    {$\ell_G$};
  \node[gray!70!black,font=\scriptsize,anchor=north east] at (\dd-0.06,2.54)
    {polar of $G$};
  \draw[conic,blue!60!black] (\gg,0) circle (\qq);
  \draw[conic,orange!85!black] (\gg,0) circle (\qp);
  \fill (0,0) circle (1.5pt) node[below left] {$O$};
  \fill (\gg,0) circle (1.5pt) node[below right] {$G$};
  \node[blue!60!black,fill=white,inner sep=1pt] at (3.75,1.20) {$C(G,q)$};
  \node[orange!85!black,fill=white,inner sep=1pt] at (4.78,-2.12) {$C(G,q+R)$};
  \draw[->,very thick,gray!70] (2.1,2.05) arc[start angle=90,end angle=45,radius=0.8];
  \node[fill=white,inner sep=1.5pt] at (2.95,2.55) {$q\mapsto q+R$};
\end{scope}

\node[align=center,font=\footnotesize] at (0,0.15)
  {$\xleftrightarrow{\text{reciprocal polarity}}$};

\begin{scope}[xshift=6.5cm]
  \node[font=\bfseries] at (0,3.45) {focal-conic side};
  \clip (-3.6,-3.05) rectangle (3.05,3.1);
  \draw[->,darkgray] (-3.2,0)--(3.0,0);
  \draw[boundary] (0,0) circle (\Rr);
  \draw[helper] (\dd,-3.1)--(\dd,3.1);
  \node[gray!70!black,font=\footnotesize,anchor=west] at (1.16,1.72) {$\ell_G$};

  \draw[conic,blue!60!black]
    plot[domain=-128:128,samples=360,variable=\t]
    ({\lamA/(1+\eccA*cos(\t))*cos(\t)},
     {\lamA/(1+\eccA*cos(\t))*sin(\t)});

  \draw[conic,orange!85!black]
    plot[domain=-\thstar:\thstar,samples=360,variable=\t]
    ({\lamB/(1+\eccB*cos(\t))*cos(\t)},
     {\lamB/(1+\eccB*cos(\t))*sin(\t)});

  \foreach \t in {-95,-55,-20,20,55,95}{
    \pgfmathsetmacro{\ro}{\lamA/(1+\eccA*cos(\t))}
    \pgfmathsetmacro{\ri}{\lamB/(1+\eccB*cos(\t))}
    \draw[pull] ({\ro*cos(\t)},{\ro*sin(\t)}) --
                ({\ri*cos(\t)},{\ri*sin(\t)});
  }

  \pgfmathsetmacro{\xe}{\Rr*cos(\thstar)}
  \pgfmathsetmacro{\ye}{\Rr*sin(\thstar)}
  \draw[orange!85!black,fill=white] (\xe,\ye) circle (1.7pt);
  \draw[orange!85!black,fill=white] (\xe,-\ye) circle (1.7pt);

  \fill (0,0) circle (1.5pt) node[below] {$O$};
  \node[black!75,font=\footnotesize] at (-1.75,-1.15) {$\partial D_R$};
  \node[blue!60!black,align=center,font=\scriptsize] at (-2.25,2.60)
    {$\RP(C(G,q))$\\[-1pt]focus-side branch};
  \draw[orange!85!black,thin] (0.22,-1.85)--(-0.08,-0.78);
  \node[orange!85!black,align=center,font=\scriptsize,fill=white,inner sep=1pt,anchor=north]
    at (0.36,-1.88) {$\RP(C(G,q+R))\big|_I$\\[-1pt]ellipse arc};
\end{scope}
\end{tikzpicture}
\caption{Reciprocal-polar linearization.  Left: the center $G$ is fixed while
the reciprocal-circle radius increases from $q$ to $q+R$; in the parameters
drawn, the expanding circle passes across $O$.  Right: reciprocal polarity
turns the two circles into conics with the same focus $O$ and the same
directrix $\ell_G$.  The blue focus-side hyperbola branch is pulled radially inward by
$f_R$ (red arrows) to the orange arc of the reciprocal-polar ellipse.  The
orange endpoints are approached but not attained and lie on
$\partial D_R$.}
\label{fig:rp-translation}
\end{figure}

The parameter identities in~\eqref{eq:cc-law} now become immediate.  Indeed,
from~\eqref{eq:rp-data},
\[
  \lambda'=\frac{R^2}{q+R},
  \qquad
  e'=\frac{g}{q+R},
\]
so, using $\delta=R^2/g$ from~\eqref{eq:rp-data},
\[
  \frac1{\lambda'}=\frac{q+R}{R^2}=\frac1\lambda+\frac1R,
  \qquad
  \frac1{e'}=\frac{q+R}{g}=\frac1e+\frac{R}{g}=\frac1e+\frac{\delta}{R},
\]
and $\lambda'/e'=R^2/g=\delta$.
The hyperbola-parabola-ellipse transition becomes the elementary event of
the expanding circle $C(G,q)$ passing through $O$.

\begin{corollary}[The Self-Directrix Theorem as the limiting case $q\to0^{+}$]\label{cor:self-directrix}
Let $\ell=\{x\cdot n=d\}$, with $d>0$ and $\norm n=1$, and let
\[
  G=\frac{R^2}{d}\,n,
  \qquad
  I_0=\{u\in S^1:G\cdot u>0\}.
\]
Then, with $P_0(u):=R^2u/(G\cdot u)$ the limit of $P_q(u)$ as $q\to 0^{+}$,
\begin{equation}\label{eq:self-rp}
  \ell=\{P_0(u):u\in I_0\}
  \quad\xrightarrow{\ f_R\ }\quad
  \{P_R(u):u\in I_0\}
  =\left.\RP(C(G,R))\right|_{I_0},
\end{equation}
and the carrier of the image arc has focus $O$, directrix $\ell$, eccentricity
$R/d$, and semi-latus rectum $R$.
\end{corollary}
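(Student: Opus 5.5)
The corollary has three parts, and I will prove them in order.

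First, I will show that $\{P_0(u):u\in I_0\}$ is exactly $\ell$. For $u\in I_0$, the point $P_0(u)=R^2u/(G\cdot u)$ satisfies
\[
  n\cdot P_0(u)=\frac{R^2\,(n\cdot u)}{(R^2/d)(n\cdot u)}=d,
\]
so it lies on $\ell$. Conversely, take any $x\in\ell$. Then $n\cdot x=d>0$, so $x\neq O$. Set $u=x/\norm x$. This gives $G\cdot u=(R^2/d)(n\cdot x)/\norm x=R^2/\norm x>0$, so $u\in I_0$. Moreover $P_0(u)=R^2u\,\norm x/R^2=x$. Hence $u\mapsto P_0(u)$ is a bijection from $I_0$ onto $\ell$; its inverse is normalization. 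It is also immediate that $P_q(u)\to P_0(u)$ as $q\to0^+$, since $G\cdot u>0$ on $I_0$. Geometrically, $\ell$ is the polar of $G$, so this is the degenerate circle $C(G,0)=\{G\}$ case.

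Second, I will identify the image. I will not take a limit inside Theorem~\ref{thm:rp-translation}, because $q=0$ is not a genuine circle. Instead I will repeat its one-line proof at $q=0$:
\[
  \frac{1}{\norm{P_0(u)}}=\frac{G\cdot u}{R^2}.
\]
By the lens identity~\eqref{eq:lens},
\[
  \frac{1}{\norm{f_R(P_0(u))}}=\frac{R+G\cdot u}{R^2}=\frac{1}{\norm{P_R(u)}}.
\]
Both points lie on the positive ray $\RR_{>0}u$, because $R+G\cdot u>0$ on $I_0$. Therefore $f_R(P_0(u))=P_R(u)$. The right-hand set is $\RP(C(G,R))|_{I_0}$ by the definition of that restriction, with $q=R$ in~\eqref{eq:Pq}. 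This establishes~\eqref{eq:self-rp}.

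Third, I will read off the carrier data from~\eqref{eq:rp-data} with $q=R$ and $g=R^2/d$:
\[
  \lambda=\frac{R^2}{R}=R,\qquad e=\frac{g}{R}=\frac{R}{d},\qquad \delta=\frac{R^2}{g}=d.
\]
The directrix is $\ell_G=\{G\cdot x=R^2\}=\{n\cdot x=d\}=\ell$. This matches~\eqref{eq:self-directrix-data}.

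I expect no serious obstacle. The only delicate point is justifying the $q=0$ case without appealing to Theorem~\ref{thm:rp-translation} at a degenerate circle, and checking that the parametrization covers all of $\ell$ rather than a sub-arc. Both points are handled directly above.
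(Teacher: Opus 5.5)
Your proof is correct and follows the paper's argument. You use the same two-way check that $u\mapsto P_0(u)$ parameterizes all of $\ell$, the lens identity together with ray preservation to get $f_R(P_0(u))=P_R(u)$, and the data~\eqref{eq:rp-data} with $q=R$, $g=R^2/d$ to read off $e=R/d$, $\lambda=R$, and directrix $\ell_G=\ell$. The only difference is that you rerun the one-line proof of Theorem~\ref{thm:rp-translation} directly at $q=0$, whereas the paper applies the theorem for $q>0$ and lets $q\to0^{+}$, which tacitly uses continuity of $f_R$. Your version is slightly more self-contained but not substantively different.
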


\begin{proof}
Here $G$ is the pole of $\ell$ with respect to $\Sigma_R$, and the corollary is
the degenerate member of the reciprocal-polar correspondence, obtained as the
circle $C(G,q)$ shrinks to its center.  For each $u\in I_0$, formula~\eqref{eq:Pq}
gives $P_q(u)\to P_0(u)$ as $q\to 0^{+}$.  Since $G\cdot P_0(u)=R^2$, every point
$P_0(u)$ lies on $\ell$; conversely, if $x\in\ell$ then $u=x/\norm{x}\in I_0$ and
\[
  P_0(u)=\frac{R^2}{G\cdot(x/\norm{x})}\,\frac{x}{\norm{x}}=x,
\]
so $u\mapsto P_0(u)$ parameterizes the whole line $\ell$.  (As $C(G,0)=\{G\}$ is a
point with no tangent family, $\RP(C(G,0))$ is understood as this limiting
reciprocal polar.)  Applying Theorem~\ref{thm:rp-translation} in the limit
$q\to 0^{+}$ yields~\eqref{eq:self-rp}, the open arc between the latus-rectum
endpoints.  By~\eqref{eq:rp-data} its carrier has
\[
  e=\frac{\norm G}{R}=\frac{R}{d},
  \qquad
  \lambda=\frac{R^2}{R}=R,
\]
the data of the Self-Directrix conic~\eqref{eq:self-directrix-data}; and, since
$G$ is the pole of $\ell$, its directrix is $\ell_G=\{x:G\cdot x=R^2\}=\ell$.
Thus $f_R$ carries $\ell$ to a conic with focus $O$ and directrix $\ell$ itself,
the self-directrix property that names the theorem.  Its type follows from
$e=R/d$: a hyperbola, parabola, or ellipse according as $d<R$, $d=R$, or $d>R$,
i.e.\ as $\ell$ meets, touches, or misses $\Sigma_R$.
\end{proof}

\medskip
\noindent
In the reciprocal-polar representation the Self-Directrix Theorem is simply the
endpoint of the radius-translation law,
\[
  \boxed{\,C(G,0)\ \xrightarrow{\ q\mapsto q+R\ }\ C(G,R)\,}
\]
and is therefore not an isolated phenomenon but the limiting member of the
general reciprocal-polar translation law.

\FloatBarrier

\section{The parabolic envelope of normals}

Besant records the following exercise: for a system of conics with a common
focus and a common latus rectum, the normals at the intersections with a
fixed line through the focus envelope a parabola; its focus lies on the
common latus-rectum line, and the fixed line is tangent at the vertex
\cite[Chap.~IX, Ex.~26, p.~189]{Besant1890}.  The self-directrix family
allows this envelope to be written explicitly.

Normalize the parallel input lines as
\[
  \ell_d=\{x=d\},\qquad d>0.
\]
By the Self-Directrix Theorem, let $\Gamma_d$ denote the carrier of
$f_R(\ell_d)$.  Its focus-side branch, which contains the actual image arc, has
polar equation
\begin{equation}\label{eq:family-polar}
  r=\frac{R}{1+(R/d)\cos\theta}.
\end{equation}
Every $\Gamma_d$ has focus $O$, axis the $x$-axis, and semi-latus rectum
$R$; hence the line $x=0$ is the common latus-rectum line.

Fix an angle
\begin{equation}\label{eq:phi-domain}
  -\frac{\pi}{2}<\phi<\frac{\pi}{2},\qquad \phi\ne0,
\end{equation}
put
\[
  u=(\cos\phi,\sin\phi),\qquad v=(-\sin\phi,\cos\phi),
\]
and let
\[
  L_\phi=\RR u,\qquad L_\phi^{+}=\{ru:r>0\}
\]
be the line through $O$ in direction $u$ and its positive ray.  In rotated
coordinates $(\xi,\eta)$, with $x=\xi u+\eta v$, the line $L_\phi$ is $\eta=0$.

\begin{theorem}[Normal-envelope parabola]\label{thm:normal-envelope}
Let $P_d=L_\phi^{+}\cap\Gamma_d$ (the intersection in direction $u$, i.e.\ the
polar angle $\theta=\phi$), and let $N_d$ be the normal to $\Gamma_d$ at $P_d$.  As $d$ ranges over $(0,\infty)$, the lines $N_d$ are tangent to the
fixed carrier parabola
\begin{equation}\label{eq:normal-envelope}
  \boxed{\qquad
  \eta=\frac{\tan\phi}{4R}(R-\xi)^2.
  \qquad}
\end{equation}
The points of tangency trace precisely the open arc $-R<\xi<R$ of this
parabola.  Its vertex is
\begin{equation}\label{eq:V}
  V=Ru\in\partial D_R,
\end{equation}
and $L_\phi$ is its tangent at $V$.  Its focus is
\begin{equation}\label{eq:J}
  J=Ru+R\cot\phi\,v
   =\left(0,\frac{R}{\sin\phi}\right),
\end{equation}
which lies on the common latus-rectum line $x=0$.
\end{theorem}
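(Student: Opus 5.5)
The plan is to work entirely in the rotated coordinates $(\xi,\eta)$ and write the normals $N_d$ as an explicit one-parameter family of lines, then compute the envelope by the usual discriminant condition: differentiate in the parameter and eliminate it.

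\emph{Step 1: the normal direction.} By the Self-Directrix Theorem~\eqref{eq:self-directrix-data}, $\Gamma_d$ has focus $O$, directrix $\ell_d$ and eccentricity $k:=R/d$. The focus-directrix condition $\norm{x}=k(d-x_1)$ therefore reads $F(x):=\norm{x}+k\,x_1=R$. Here $x_1$ is the first Cartesian coordinate and $e_1=(1,0)$. Hence the normal at a point $x\neq O$ has direction $\nabla F=x/\norm{x}+k e_1$, and at $P_d$ this is $u+k e_1$. This avoids differentiating the polar form~\eqref{eq:family-polar}.

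\emph{Step 2: the family of normals.} From~\eqref{eq:family-polar} at $\theta=\phi$ we get $P_d=r u$ with $r=R/s$, where $s:=1+k\cos\phi$. Since $\lvert\phi\rvert<\pi/2$ we have $\cos\phi>0$, so $s>1$. In rotated coordinates $e_1=\cos\phi\,u-\sin\phi\,v$, so the normal direction is $(s,-k\sin\phi)$ and $P_d=(R/s,0)$. Writing $k=(s-1)/\cos\phi$ and introducing the parameter $w:=1/s$, I expect
\[
  N_d:\quad \eta=-\tan\phi\,(1-w)\,(\xi-Rw)
  =-\tan\phi\,\bigl(\xi-w(\xi+R)+Rw^2\bigr).
\]
As $d$ runs over $(0,\infty)$, $k$ runs over $(0,\infty)$, so $w$ runs bijectively and monotonically over the open interval $(0,1)$.

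\emph{Step 3: the envelope.} The family is quadratic in $w$. Setting the $w$-derivative to zero gives $-(\xi+R)+2Rw=0$, i.e.\ $w=(\xi+R)/(2R)$. Substituting back, and using $(\xi+R)^2/(4R)-\xi=(\xi-R)^2/(4R)$, yields~\eqref{eq:normal-envelope}. Because the dependence on $w$ is a nondegenerate quadratic with leading coefficient $-R\tan\phi\neq0$ (this is where $\phi\neq0$ is used), each line is genuinely tangent to this parabola. The point of contact is at $\xi=2Rw-R$, so $w\in(0,1)$ corresponds exactly to $\xi\in(-R,R)$, which is the claimed open arc.

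\emph{Step 4: vertex and focus.} Writing the parabola as $(\xi-R)^2=4p\,\eta$ with $p=R\cot\phi$, the vertex is $(\xi,\eta)=(R,0)$, i.e.\ $V=Ru\in\partial\DR$, and the tangent there is $\eta=0$, which is $L_\phi$. The focus is $(R,p)$, i.e.\ $J=Ru+R\cot\phi\,v$. Expanding in Cartesian components gives $x=R\cos\phi-R\cot\phi\sin\phi=0$ and $y=R\sin\phi+R\cos^2\phi/\sin\phi=R/\sin\phi$, which establishes~\eqref{eq:J} and shows that $J$ lies on the latus-rectum line $x=0$.

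The only delicate point is Step 2, namely choosing the parameter $w=1/(1+k\cos\phi)$ that makes the family polynomial. Once that substitution is found, the envelope computation and the identification of the arc are routine. I would double-check the signs of the normal direction in rotated coordinates, since they determine the orientation $\eta\propto\tan\phi$.
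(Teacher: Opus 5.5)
Your proposal is correct and is essentially the paper's proof. Your normal direction $u+ke_1$ is the paper's $du+R(1,0)$ divided by $d$, and your parameter $w=1/(1+k\cos\phi)$ is just the paper's $r_d/R$, so your family $\eta=-\tan\phi\,(1-w)(\xi-Rw)$ is exactly the paper's $F(r;\xi,\eta)=0$; the envelope, contact-point, vertex and focus computations then coincide. The only addition is your discriminant remark: each line meets the parabola in a double point because the $w$-quadratic has a constant nonzero leading coefficient, which makes explicit the tangency that the paper leaves to the standard envelope condition.
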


\begin{proof}
At the fixed angle $\theta=\phi$, equation~\eqref{eq:family-polar} gives
\begin{equation}\label{eq:rd}
  P_d=r_d u,
  \qquad
  r_d=\frac{Rd}{d+R\cos\phi}.
\end{equation}
The restriction $\cos\phi>0$ also shows that $P_d$ lies on the actual
self-directrix image arc: the ray in direction $u$ meets $\ell_d$ at
$z_d=(d,d\tan\phi)$, and $f_R(z_d)=\dfrac{Rd}{d+R\cos\phi}\,u=P_d$.
On its focus-side branch, and in particular near $P_d$, the conic $\Gamma_d$ has
the implicit equation
\begin{equation}\label{eq:implicit-Gd}
  d\sqrt{x^2+y^2}+Rx-Rd=0.
\end{equation}
At $P_d=r_d u$, a normal direction is therefore
\[
  d u+R(1,0).
\]
Since $(1,0)=\cos\phi\,u-\sin\phi\,v$, its components in the
$(u,v)$ basis are
\[
  \bigl(d+R\cos\phi,\,-R\sin\phi\bigr).
\]
Hence the normal line through $(r_d,0)$ is
\begin{equation}\label{eq:normal-d}
  R\sin\phi\,(\xi-r_d)+(d+R\cos\phi)\eta=0.
\end{equation}
As $d$ varies from $0$ to $\infty$, $r_d$ varies from $0$ to $R$.  Solving
\eqref{eq:rd} for $d$ gives
\[
  d=\frac{Rr\cos\phi}{R-r},
\]
where $r=r_d$.  Substitution in~\eqref{eq:normal-d} reduces the one-parameter
family of normals to
\begin{equation}\label{eq:normal-r}
  F(r;\xi,\eta)
  :=\sin\phi\,(R-r)(\xi-r)+R\cos\phi\,\eta=0,
  \qquad 0<r<R.
\end{equation}
The envelope is determined by
$F=0$ and $\partial F/\partial r=0$.  Since
\[
  \frac{\partial F}{\partial r}
  =\sin\phi\,(2r-R-\xi),
\]
we obtain $r=(R+\xi)/2$.  Substituting this value into
\eqref{eq:normal-r} yields
\[
  -\frac{\sin\phi}{4}(R-\xi)^2+R\cos\phi\,\eta=0,
\]
which is~\eqref{eq:normal-envelope}.

Writing~\eqref{eq:normal-envelope} as
\[
  (\xi-R)^2=4R\cot\phi\,\eta
\]
shows that its vertex is $(R,0)$, its tangent there is $\eta=0$, and its
focus in rotated coordinates is $(R,R\cot\phi)$.  Converting back to
$(x,y)$ gives~\eqref{eq:J}.
\end{proof}

The point of contact of $N_d$ with the envelope is also explicit.  If $r=r_d$, then
\begin{equation}\label{eq:contact-normal}
  \xi_d=2r-R,
  \qquad
  \eta_d=\frac{\tan\phi}{R}(R-r)^2,
  \qquad -R<\xi_d<R.
\end{equation}
The endpoints $\xi=-R$ and $\xi=R$ are limiting points corresponding to
$d\to 0^{+}$ and $d\to\infty$, respectively; in particular, the fixed
line $L_\phi$ is the limiting normal at the vertex $V$.

\begin{figure}[!t]
\centering
\begin{tikzpicture}[
  scale=1.05,
  >={Latex[length=2.3mm]},
  line cap=round,line join=round,font=\small,
  conic/.style={very thick,blue!60!black},
  normal/.style={thick,red!70!black},
  envelope/.style={very thick,orange!85!black},
  envelope extension/.style={very thick,orange!85!black,densely dashed},
  boundary/.style={densely dotted,thick,black!75},
  helper/.style={thick,dashed,gray!65}
]
\def\RRR{2.0}
\def\ph{35}
\pgfmathsetmacro{\cc}{cos(\ph)}
\pgfmathsetmacro{\ss}{sin(\ph)}
\pgfmathsetmacro{\ttan}{tan(\ph)}
\pgfmathsetmacro{\JY}{\RRR/\ss}

\clip (-4.7,-3.75) rectangle (4.45,3.95);
\draw[->,darkgray] (-4.25,0)--(4.35,0) node[below right]{common axis};
\draw[helper] (0,-2.05)--(0,3.92);
\node[gray!70!black,font=\footnotesize,anchor=west] at (0.5,3.62)
  {common latus-rectum line};
\draw[->,gray!65,thin] (0.45,3.68)--(0.03,3.85);
\draw[boundary] (0,0) circle (\RRR);
\node[black!75,font=\footnotesize] at (-1.55,-1.58) {$\partial D_R$};

\draw[conic,opacity=0.58]
  plot[domain=-104:104,samples=300,variable=\t]
  ({\RRR/(1+(\RRR/0.6)*cos(\t))*cos(\t)},
   {\RRR/(1+(\RRR/0.6)*cos(\t))*sin(\t)});
\draw[conic,opacity=0.76]
  plot[domain=-138:138,samples=320,variable=\t]
  ({\RRR/(1+cos(\t))*cos(\t)},
   {\RRR/(1+cos(\t))*sin(\t)});
\draw[conic]
  plot[domain=0:360,samples=320,variable=\t]
  ({\RRR/(1+0.5*cos(\t))*cos(\t)},
   {\RRR/(1+0.5*cos(\t))*sin(\t)});

\node[blue!60!black,font=\footnotesize,fill=white,inner sep=0.8pt] at (-0.78,3.32) {$d<R$};
\node[blue!60!black,font=\footnotesize,fill=white,inner sep=0.8pt] at (-2.10,3.34) {$d=R$};
\node[blue!60!black,font=\footnotesize,fill=white,inner sep=0.8pt] at (-3.35,-1.28) {$d>R$};

\draw[very thick,gray!70]
  (0,0)--({3.7*\cc},{3.7*\ss});
\node[gray!70!black,fill=white,inner sep=1pt] at ({3.35*\cc},{3.35*\ss+0.42}) {$L_\phi$};

\draw[envelope extension]
  plot[domain=-3.0:-2.0,samples=70,variable=\xi]
  ({\xi*\cc-(\ttan/(4*\RRR))*(\RRR-\xi)^2*\ss},
   {\xi*\ss+(\ttan/(4*\RRR))*(\RRR-\xi)^2*\cc});
\draw[envelope]
  plot[domain=-2.0:2.0,samples=220,variable=\xi]
  ({\xi*\cc-(\ttan/(4*\RRR))*(\RRR-\xi)^2*\ss},
   {\xi*\ss+(\ttan/(4*\RRR))*(\RRR-\xi)^2*\cc});
\draw[envelope extension]
  plot[domain=2.0:3.15,samples=70,variable=\xi]
  ({\xi*\cc-(\ttan/(4*\RRR))*(\RRR-\xi)^2*\ss},
   {\xi*\ss+(\ttan/(4*\RRR))*(\RRR-\xi)^2*\cc});
\node[orange!85!black,anchor=east,fill=white,inner sep=1pt] at (-2.55,0.86)
  {envelope arc};
\draw[->,orange!85!black,thin] (-2.48,0.78)--(-1.92,0.38);

\fill (0,0) circle (1.5pt) node[below right] {$O$};
\fill (0,\JY) circle (1.6pt) node[right] {$J$};
\pgfmathsetmacro{\VX}{\RRR*\cc}
\pgfmathsetmacro{\VY}{\RRR*\ss}
\draw[orange!85!black,fill=white] (\VX,\VY) circle (1.8pt);
\node[orange!85!black,fill=white,inner sep=1pt] at (1.45,1.72) {$V$};

\foreach \dval/\lab/\exta/\extb in {0.6/1/0.38/0.72,2.0/2/0.45/0.90,4.0/3/0.55/1.05}{
  \pgfmathsetmacro{\rr}{\RRR*\dval/(\dval+\RRR*\cc)}
  \pgfmathsetmacro{\px}{\rr*\cc}
  \pgfmathsetmacro{\py}{\rr*\ss}
  \pgfmathsetmacro{\xii}{2*\rr-\RRR}
  \pgfmathsetmacro{\etaa}{\ttan/\RRR*(\RRR-\rr)^2}
  \pgfmathsetmacro{\tx}{\xii*\cc-\etaa*\ss}
  \pgfmathsetmacro{\ty}{\xii*\ss+\etaa*\cc}
  \pgfmathsetmacro{\ax}{\tx-\exta*(\px-\tx)}
  \pgfmathsetmacro{\ay}{\ty-\exta*(\py-\ty)}
  \pgfmathsetmacro{\bx}{\px+\extb*(\px-\tx)}
  \pgfmathsetmacro{\by}{\py+\extb*(\py-\ty)}
  \draw[normal] (\ax,\ay)--(\bx,\by);
  \fill[blue!60!black] (\px,\py) circle (1.45pt);
  \fill[orange!85!black] (\tx,\ty) circle (1.25pt);
}
\node[red!70!black,font=\footnotesize,anchor=west] at (2.05,0.18) {selected normals};
\draw[->,red!70!black,thin] (2.00,0.24)--(1.34,0.57);
\end{tikzpicture}
\caption{The normal-envelope theorem for the self-directrix sweep.  Three
members of the common-focus, common-latus-rectum family are shown in blue:
the focus-side branch of a hyperbola ($d<R$), a parabola ($d=R$), and an
ellipse ($d>R$).  Their selected intersection points lie on the fixed positive
ray $L_\phi^{+}$.  The red normal at each selected point is tangent to the solid
orange envelope arc of the carrier parabola~\eqref{eq:normal-envelope}; the
dashed orange pieces show its continuation.  The fixed line $L_\phi$ is tangent
to the carrier parabola at the limiting vertex $V=Ru\in\partial D_R$, while the
focus $J$ lies on the common latus-rectum line.}
\label{fig:normal-envelope}
\end{figure}

\section{A direct tangent construction from the input line}

A classical focus-directrix theorem states that the line from the focus to
the point where a tangent meets the directrix is perpendicular to the focal
radius through the point of contact
\cite[Prop.~III, p.~8]{Besant1890}.  Because the Self-Directrix Theorem uses
the \emph{input line itself} as the directrix of the image conic, this gives
a particularly economical construction.

\begin{proposition}[Tangent from the original point and line]\label{prop:tangent-construction}
Let $\ell$ be a line not through $O$, let $z\in\ell$, and put
\[
  w=f_R(z).
\]
Let $m=Oz$ be the line through $O$ and $z$, and let $m^\perp$ be the line
through $O$ perpendicular to $m$.  If
\begin{equation}\label{eq:F-construction}
  F=\ell\cap m^\perp
\end{equation}
is finite, then the tangent to the carrier conic of $f_R(\ell)$ at $w$ is
\begin{equation}\label{eq:tangent-Fw}
  \boxed{\quad Fw.\quad}
\end{equation}
If $m^\perp\parallel\ell$ (the near-vertex case), the tangent at $w$ is
parallel to $\ell$.
\end{proposition}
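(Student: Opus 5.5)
The plan is to combine the Self-Directrix Theorem with the classical focus-directrix tangent property recorded by Besant. By Corollary~\ref{cor:self-directrix}, or equivalently \eqref{eq:self-directrix-data}, the carrier conic $\Gamma$ of $f_R(\ell)$ has focus $O$ and directrix exactly $\ell$. Since $f_R$ preserves rays from $O$, the point $w=f_R(z)$ lies on the line $m=Oz$, on the same side of $O$ as $z$. Thus the focal radius $Ow$ is carried by $m$, and $m^\perp$ is the line through the focus perpendicular to that focal radius.

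The classical theorem \cite[Prop.~III, p.~8]{Besant1890} says the following: if the tangent at a point $w$ of a conic meets the directrix at a point $T$, then $OT\perp Ow$. Hence $T$ lies on $m^\perp\cap\ell=F$, so $T=F$, and the tangent is the line $Fw$, provided $F\neq w$. This holds because $w\in m$, $F\in m^\perp$, and $w\neq O$, so $F=w$ would force $w=O$.

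To keep the argument self-contained, I would also verify the classical fact directly rather than only cite it. Write $\ell=\{x\cdot n=d\}$ and $\Gamma=\{\norm{x}=e(d-x\cdot n)\}$ near $w$. This is the focus-side branch, on which $d-x\cdot n>0$, as \eqref{eq:implicit-Gd} shows in normalized form. The gradient at $w$ is $w/\norm w+e\,n$, so the tangent line is
\[
  (w/\norm w+e n)\cdot(x-w)=0 .
\]
Substituting $x=F$ and using $F\cdot w=0$ (since $F\in m^\perp$) and $F\cdot n=d$ gives $e(d-w\cdot n)-\norm w$ for the left side. This vanishes precisely because $w\in\Gamma$. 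The computation is two lines and also covers the hyperbolic and parabolic cases uniformly.

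Finally, consider the degenerate case $m^\perp\parallel\ell$. It occurs when $m$ is parallel to $n$, i.e.\ when $z$ is the foot of the perpendicular from $O$ to $\ell$ and $w$ is the vertex. Then $w$ is a multiple of $n$, and the gradient $w/\norm w+en$ is parallel to $n$. The tangent is therefore perpendicular to $n$, i.e.\ parallel to $\ell$. The main obstacle is only a bookkeeping one: making sure that $w$ lies on the branch for which the implicit equation with the correct sign of $d-x\cdot n$ holds. This is ensured because $w$ lies on the image arc on the focal side of the directrix. I also need to check that the gradient does not vanish. It vanishes only if $e=1$ and $w/\norm w=-n$, and that would place $w$ in a direction where the image arc does not exist, since $z\cdot n=d>0$.
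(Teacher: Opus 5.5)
Your proposal is correct and follows essentially the same route as the paper. Both arguments use the Self-Directrix Theorem for focus $O$ and directrix $\ell$, use radiality to put $w$ on $m$, verify the classical tangent--directrix fact by differentiating $\norm{x}=e(d-x\cdot n)$ on the focus-side branch, and treat the vertex case separately. The differences are cosmetic: you substitute $F=\ell\cap m^\perp$ directly into the tangent equation in coordinate-free form (and also check $F\ne w$ and that the gradient is nonzero), whereas the paper rotates to $\ell=\{x=d\}$, shows that the tangent's intersection with $\ell$ satisfies $F\cdot w=0$, and then appeals to uniqueness.
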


\begin{proof}
By radiality, $O,z,w$ are collinear.  By the Self-Directrix Theorem, the
carrier $\Gamma$ of $f_R(\ell)$ has focus $O$ and directrix $\ell$.
It therefore suffices to verify the classical tangent-directrix fact in the
present normalization.

Rotate coordinates so that $\ell=\{x=d\}$, $d>0$.  On the focus-side branch of
$\Gamma$, which contains the image point $w$, the carrier has equation
\begin{equation}\label{eq:tangent-implicit}
  r=e(d-x),\qquad r=\sqrt{x^2+y^2},
\end{equation}
with $e=R/d$.  Let $w=(x,y)$, with $y\ne0$, and let the tangent at $w$ meet
$\ell$ at $F=(d,\eta)$.  Differentiating
$r+ex-ed=0$ (with $\nabla r=(x/r,y/r)$) shows that a normal at $w$ is
\[
  \left(\frac{x}{r}+e,\frac{y}{r}\right).
\]
Hence the tangent equation, evaluated at $F$, is
\[
  (x+er)(d-x)+y(\eta-y)=0.
\]
Using $r=e(d-x)$ and
$r^2=x^2+y^2=e^2(d-x)^2$, the left-hand side reduces to
\[
  dx+\eta y.
\]
Thus $F\cdot w=0$, which is exactly $OF\perp Ow$.  The point
$\ell\cap (Ow)^\perp$ is unique, so it is the intersection of the tangent
with the directrix.  Since $Ow=Oz$, this point is precisely the $F$ in
\eqref{eq:F-construction}.  When $y=0$, the perpendicular through $O$ is
parallel to $\ell$, and the vertex tangent is parallel to the directrix.
\end{proof}

In normalized coordinates the construction is especially explicit.  If
\[
  \ell=\{x=d\},\qquad z=(d,t),\qquad t\ne0,
\]
then
\begin{equation}\label{eq:F-explicit}
  F=\left(d,-\frac{d^2}{t}\right),
\end{equation}
because $F\cdot z=0$.  One needs only the original line $\ell$, the original
ray $Oz$, and a perpendicular through $O$; no conic parameters need be
computed.

\begin{figure}[htbp]
\centering
\begin{tikzpicture}[
  scale=1.05,
  >={Latex[length=2.3mm]},
  line cap=round,line join=round,font=\small,
  conic/.style={very thick,orange!85!black},
  tangent/.style={very thick,blue!60!black},
  helper/.style={thick,dashed,gray!65},
  boundary/.style={densely dotted,thick,black!75},
  pull/.style={->,red!70!black,thick}
]
\def\Rz{1.5}
\def\dz{3.0}
\def\tz{4.5}
\pgfmathsetmacro{\rhoZ}{sqrt(\dz*\dz+\tz*\tz)}
\pgfmathsetmacro{\sfac}{\Rz/(\Rz+\rhoZ)}
\pgfmathsetmacro{\wx}{\sfac*\dz}
\pgfmathsetmacro{\wy}{\sfac*\tz}
\pgfmathsetmacro{\Fy}{-\dz*\dz/\tz}
\pgfmathsetmacro{\ee}{\Rz/\dz}

\draw[->,darkgray] (-3.55,0)--(3.8,0) node[below right]{axis};
\draw[boundary] (0,0) circle (\Rz);
\node[black!75,font=\footnotesize] at (-1.06,-1.34) {$\partial D_R$};
\draw[helper] (\dz,-2.4)--(\dz,4.7);
\node[gray!70!black,anchor=west] at (\dz+0.08,1.55) {directrix $\ell$};

\draw[conic]
  plot[domain=0:360,samples=320,variable=\t]
  ({\Rz/(1+\ee*cos(\t))*cos(\t)},
   {\Rz/(1+\ee*cos(\t))*sin(\t)});
\node[orange!85!black] at (-2.15,2.0) {carrier conic};

\draw[gray!60,thick] (-0.15*\dz,-0.15*\tz)--(1.03*\dz,1.03*\tz);
\draw[helper] (-0.48*\dz,0.48*\dz*\dz/\tz)--(1.15*\dz,-1.15*\dz*\dz/\tz);

\draw[tangent] (\dz,\Fy)--({\wx-0.75*(\dz-\wx)},{\wy-0.75*(\Fy-\wy)});
\draw[pull,shorten <=3pt,shorten >=3pt] (\dz,\tz)--(\wx,\wy);

\coordinate (O) at (0,0);
\coordinate (Z) at (\dz,\tz);
\coordinate (W) at (\wx,\wy);
\coordinate (F) at (\dz,\Fy);
\fill (O) circle (1.5pt) node[below left] {$O$};
\fill[black] (Z) circle (1.55pt) node[right] {$z$};
\fill[orange!85!black] (W) circle (1.75pt);
\draw[gray!55,thin] (1.30,1.12)--(\wx+0.10,\wy+0.05);
\node[orange!85!black,anchor=west] at (1.30,1.15) {$w=f_R(z)$};
\fill[blue!60!black] (F) circle (1.65pt) node[right] {$F$};
\pic[draw,gray!70,angle radius=7pt] {right angle=F--O--W};

\node[blue!60!black,rotate=-51] at (2.30,-0.8) {tangent $Fw$};
\node[gray!70!black,font=\footnotesize,rotate=-34] at (-1.45,0.92)
  {$(Oz)^\perp$};
\end{tikzpicture}
\caption{Direct construction of the tangent.  The point $z$ on the input
line $\ell$ is pulled radially inward to $w=f_R(z)$ on the self-directrix
conic.  Draw the line through $O$ perpendicular to the ray $Oz=Ow$; it meets
$\ell$ at $F$.  The blue line $Fw$ is the tangent to the carrier conic at
$w$.}
\label{fig:tangent-construction}
\end{figure}

\FloatBarrier

\section{Concluding remarks}

The three results arise from two elementary features of the cone projection.
Under the bijective correspondence between focal conics and their associated
circles, the lens identity~\eqref{eq:lens} becomes the literal radius
translation $q\mapsto q+R$ of Theorem~\ref{thm:rp-translation}.  In the limiting line case it also yields the
common semi-latus rectum $R$ of the self-directrix sweep, whose normals along a
fixed ray envelope the explicit parabola of Theorem~\ref{thm:normal-envelope}.
The complementary ray-preserving property places $w=f_R(z)$ on the ray $Oz$,
which is exactly what the tangent construction of
Proposition~\ref{prop:tangent-construction} uses.  In each case a classical
theorem of Besant supplies the geometry and the cone projection supplies the
specific action, giving an elementary, synthetic complement to the analytic
development of~\cite{Georgiou-cone}.

\end{document}